\documentclass[11pt]{amsart}

\setlength{\topmargin}{-0.5cm} \setlength{\textwidth}{15cm}
\setlength{\textheight}{22.6cm} \setlength{\topmargin}{-0.25cm}
\setlength{\headheight}{1em} \setlength{\headsep}{0.5cm}
\setlength{\oddsidemargin}{0.40cm}
\setlength{\evensidemargin}{0.40cm}

\usepackage{amsfonts,amsmath,amssymb,amsthm,amscd,amsxtra}
\usepackage{enumerate,epsfig,verbatim}

\usepackage{amsfonts,amsmath,amssymb,amsthm,amscd,amsxtra}
\usepackage{enumerate,verbatim}

\usepackage[all,2cell,ps]{xy}
\usepackage[notcite,notref]{}
\usepackage[pagebackref]{hyperref}

\usepackage[all,2cell,ps]{xy}
\usepackage{amsfonts}
\usepackage[margin=1.4in]{geometry}
\usepackage{color}
\usepackage[notcite,notref]{}
\usepackage{url}
\usepackage{datetime}
\usepackage{mathtools}

\usepackage[all,2cell,ps]{xy}
\usepackage{amssymb}
\usepackage{amsmath}
\usepackage{amsfonts}
\usepackage{amsmath}
\usepackage{amsthm}
\usepackage{amssymb}
\usepackage{amscd}
\usepackage{amsfonts}
\usepackage{amsxtra}     
\usepackage{epsfig}
\usepackage{verbatim}
\usepackage[all]{xypic}
\SelectTips{cm}{}

\def\latex/{{\protect\LaTeX}}
\def\latexe/{{\protect\LaTeXe}}
\def\amslatex/{{\protect\AmS-\protect\LaTeX}}
\def\tex/{{\protect\TeX}}
\def\amstex/{{\protect\AmS-\protect\TeX}}
\def\bibtex/{{Bib\protect\TeX}}
\def\makeindx/{\textit{MakeIndex}}

\usepackage{amsmath}
\usepackage{amsthm}
\usepackage{amssymb}
\usepackage{amscd}
\usepackage{amsxtra}     
\usepackage{epsfig}
\usepackage{verbatim}
\usepackage[all]{xypic}
\usepackage{enumerate}


\theoremstyle{plain} 

\newtheorem{thm}{Theorem}[section]

\newtheorem{cor}[thm]{Corollary}

\theoremstyle{definition}
\newtheorem{chunk}[thm]{\hspace*{-1.065ex}\bf}

\newtheorem{ques}[thm]{Question}

\newtheorem{rmk}[thm]{Remark}

\newcommand{\bZ}{\mathbb{Z}}

\newcommand{\fm}{\mathfrak{m}}

\newcommand{\bR}{\mathbb{R}}
\newcommand{\bN}{\mathbb{N}}
\newcommand{\ltensor}{\tensor^{\bf{L}}}

\newcommand{\qpd}{\operatorname{qpd}}
\newcommand{\susp}{\mathsf{\Sigma}}

\newcommand{\ZZ}{\mathbb{Z}}

\newcommand{\tensor}{\otimes}

\DeclareMathOperator{\ds}{\displaystyle}

 \DeclareMathOperator{\Tor}{Tor}

\DeclareMathOperator{\Hom}{Hom}
\DeclareMathOperator{\Ho}{H}
\DeclareMathOperator{\CI}{\textnormal{CI-dim}}

 \DeclareMathOperator{\Supp}{Supp}
 \DeclareMathOperator{\Spec}{Spec}

 \DeclareMathOperator{\pd}{pd}
 
 \DeclareMathOperator{\cx}{cx}

 \DeclareMathOperator{\edim}{embdim}
 \DeclareMathOperator{\depth}{depth}

 \DeclareMathOperator{\htt}{height}

\newcommand{\Ann}{\textup{Ann}}

\newcommand{\zdr}[2]{{\boldsymbol\top}_{\hskip-2pt #1}{#2}}

\def\urltilda{\kern -.15em\lower .7ex\hbox{\~{}}\kern
  .04em}\def\urldot{\kern -.10em.\kern -.10em}\def\urlhttp{http\kern
  -.10em\lower -.1ex\hbox{:}\kern -.12em\lower 0ex\hbox{/}\kern
  -.18em\lower 0ex\hbox{/}} 

\newcommand{\bb}{\left[ \begin{smallmatrix}}
\newcommand{\eb}{\end{smallmatrix} \right]}

\begin{document}

\title[Syzygies and tensor product of modules]{syzygies and tensor
  product of modules }

\author{Olgur Celikbas} \address{Department of Mathematics, 323
  Mathematical Sciences Bldg, University of Missouri,
  Columbia, MO 65211 USA} \email{celikbaso@missouri.edu}

\author{Greg Piepmeyer} \address{Department of Statistics, 146
  Middlebush Hall, University of Missouri, Columbia, MO
  65211 USA} \email{ggpz95@mail.missouri.edu}

\thanks{2010 {\em Mathematics Subject Classification.} 13C12, 13C13}
\keywords{Derived depth formula, complex, complexity, complete intersection dimension, Serre's condition, tensor product, torsion, Tor-rigidity, New
  Intersection Theorem} 
\maketitle{}

\begin{abstract} 
We give an application of the New Intersection Theorem and prove the following: let $R$ be a local complete intersection ring of codimension $c$ and let $M$ and $N$ be nonzero finitely generated $R$-modules. Assume $n$ is a nonnegative integer and that the tensor product $M\tensor_{R}N$ is an $(n+c)$th syzygy of some finitely generated $R$-module. If $\Tor^{R}_{>0}(M,N)=0$, then both $M$ and $N$ are $n$th syzygies of some finitely generated $R$-modules.
\end{abstract}

\section{Introduction}

Auslander's rigidity theorem \cite[2.2]{Au} states that
finitely generated modules over unramified (or equi-characteristic)
regular local rings are {\em Tor-rigid}, that is, if $R$ is
such a ring, and $M$ and $N$ are 
such modules, then the vanishing of $\Tor^{R}_{n}(M,N)$ for some
nonnegative integer $n$ forces the vanishing of all subsequent Tor
modules. This result yielded several consequences and received considerable attention; Lichtenbaum \cite{Li} extended it to all regular rings by proving the ramified case.

Auslander \cite{Au} made use of his rigidity result, analyzing \emph{torsion} submodules of the
tensor products of finitely generated modules over unramified regular local
rings. Recall that, if $R$ is a commutative Noetherian ring, then the \emph{torsion} submodule $\zdr R M$ of an $R$-module $M$ is defined as the set $\{x\in M: rx=0 \text{ for
  some non zero-divisor } r\in{R} \}$. $M$ is called
\emph{torsion-free} if $\zdr R M=0$, and \emph{torsion} if $\zdr R M=M$; see
(\ref{Snfact}).
An important consequence of Auslander's rigidity theorem we are concerned with in this paper is:

\begin{thm} (Auslander \cite[3.1]{Au}) \label{rigid} %
  Let $R$ be an unramified  (or equi-characteristic) regular local ring and let $M$ and $N$ be finitely generated $R$-modules. Assume that the tensor product
  $M\otimes_{R}N$ is torsion-free. Then (i) $\Tor^{R}_{>0}(M,N)=0$,
  and (ii) $M$ and $N$ are both torsion-free.
\end{thm}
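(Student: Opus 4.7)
The plan is to deduce part (i) from Auslander's rigidity theorem and then derive (ii) from (i) by localizing the depth formula. By rigidity, once the single vanishing $\Tor^R_1(M,N)=0$ is established, the vanishing $\Tor^R_i(M,N)=0$ for every $i\geq 1$ is automatic. The task therefore reduces to showing $\Tor^R_1(M,N)=0$, and I would proceed by induction on $d=\dim R$.

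The base cases are direct: $d=0$ is trivial, and for $d=1$ the ring $R$ is a DVR, where the structure theorem together with $R/\pi^a\tensor_R R/\pi^b\iso R/\pi^{\min(a,b)}$ shows that $M\tensor_R N$ torsion-free forces both $M$ and $N$ to be free. For $d\geq 2$, the hypotheses localize: $R_\mathfrak p$ remains unramified regular local at every prime $\mathfrak p$ (unramifiedness is preserved under localization), and torsion-freeness of $M\tensor_R N$ descends under flat base change. Applied at every prime $\mathfrak p$ of height strictly less than $d$, the induction hypothesis gives $\Tor^R_{>0}(M,N)_\mathfrak p=0$, so each $\Tor^R_i(M,N)$ with $i\geq 1$ is supported only at $\mathfrak m$ and has finite length. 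Assuming $\Tor^R_1(M,N)\neq 0$ for contradiction, I would take a free presentation $0\to K\to F\to M\to 0$ and tensor with $N$ to produce
\[
0\to \Tor^R_1(M,N)\to K\tensor_R N\to F\tensor_R N\to M\tensor_R N\to 0.
\]

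The hardest step is extracting the contradiction from the coexistence of the finite-length (hence depth-zero) submodule $\Tor^R_1(M,N)$ and the torsion-free quotient $M\tensor_R N$. The natural tool is a depth inequality for the smallest nonvanishing Tor, a byproduct of the proof of rigidity that reflects the codimension-zero case of the New Intersection Theorem highlighted in the abstract; applied at $q=1$ it should force $\depth \Tor^R_1(M,N)\geq 1$, contradicting finite length. For (ii), once $\Tor^R_{>0}(M,N)=0$ is known, I would invoke Auslander's depth formula prime-by-prime. Flat base change gives $\Tor^{R_\mathfrak p}_{>0}(M_\mathfrak p,N_\mathfrak p)=0$ for every prime $\mathfrak p$, and torsion-freeness descends to each localization, so
\[
\depth M_\mathfrak p+\depth N_\mathfrak p=\depth R_\mathfrak p+\depth(M\tensor_R N)_\mathfrak p\geq\depth R_\mathfrak p+1
\]
whenever $\height\mathfrak p\geq 1$. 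Hence $\depth M_\mathfrak p\geq 1$ and $\depth N_\mathfrak p\geq 1$ at every prime of positive height, so neither $M$ nor $N$ has an associated prime of positive height. Since $R$ is a domain, both $M$ and $N$ are torsion-free.
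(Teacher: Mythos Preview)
The paper does not prove this theorem; it is quoted from Auslander \cite[3.1]{Au} as historical motivation, and no argument for it appears anywhere in the text. There is thus no proof in the paper to compare your proposal against.

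That said, your outline is broadly in the spirit of Auslander's original approach, with two soft spots worth flagging. In part~(i), the depth inequality you reach for (Auslander's precursor to the depth formula) applies to the \emph{largest} index $q$ with $\Tor^R_q(M,N)\neq 0$, not to $q=1$; after your induction you only know that the $\Tor^R_i(M,N)$ for $i\geq 1$ have finite length, not that they vanish above $i=1$, so you cannot yet set $q=1$. One must instead take $q$ maximal and derive the contradiction from $\depth\Tor^R_q(M,N)=0$ there. In part~(ii), from $\depth M_{\mathfrak p}+\depth N_{\mathfrak p}\geq\depth R_{\mathfrak p}+1$ you cannot conclude that \emph{both} summands are at least $1$; you need the extra input $\depth M_{\mathfrak p}\leq\depth R_{\mathfrak p}$ (Auslander--Buchsbaum, since $R$ is regular) and then symmetry. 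You should also justify $\Supp M=\Supp N=\Spec R$: since $R$ is a domain and $M\otimes_R N$ is nonzero and torsion-free, both $M$ and $N$ have positive rank, hence full support---otherwise the localized depth formula is the vacuous $\infty=\infty$ at primes outside $\Supp M\cap\Supp N$. The paper's Remark~\ref{dis} carries out essentially this argument for (ii) in the hypersurface case.
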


Let us remark that regular local rings are complete intersections of
codimension zero.  Furthermore, in this setting, torsion-freeness is
equivalent to \emph{Serre's condition} $(S_{1})$; see
(\ref{Sn}). Therefore it is quite natural to ask whether a similar
conclusion of Theorem \ref{rigid} holds, for tensor products of
modules satisfying higher $(S_{n})$ conditions, over complete
intersection rings of higher codimension. Huneke and R.~Wiegand
\cite{HW1} pioneered this question and extended many of Auslander's
results to \emph{hypersurfaces}, that is, complete intersections of
codimension one. They proved, with an unavoidable hypothesis, that if
$R$ is a hypersurface ring and the tensor product $M\otimes_{R}N$ is
reflexive, then $\Tor^{R}_{>0}(M,N)=0$. Their result is known as the
\emph{Second Rigidity Theorem}:

\begin{thm} (Second Rigidity Theorem, \cite[Theorem 1]{HW3}) \label{secondrigidity} 
Let $R$ be a local hypersurface and let $M$ and $N$ be finitely generated $R$-modules. Assume $M$ or $N$ has constant rank. Assume further that the tensor product $M\otimes_{R}N$ is reflexive. Then $\Tor^{R}_{>0}(M,N)=0$.
\end{thm}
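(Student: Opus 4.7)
The plan is to combine standard faithfully flat reductions with an induction on $\dim R$, and then to exploit the two-periodic structure of minimal resolutions over a hypersurface. First I would pass to the completion of $R$ with a large residue field: this preserves the hypersurface assumption, the constant rank hypothesis, reflexivity of $M\tensor_R N$, and the desired conclusion. Without loss one may further assume $M$ and $N$ are both maximal Cohen--Macaulay after replacing them with sufficiently high syzygies (any sufficiently high syzygy over a hypersurface is MCM), as this only shifts indices in $\Tor^{R}_{>0}$.

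The main engine is induction on $\dim R$. Localizing at any non-maximal prime $\mathfrak{p}$ of $R$, the ring $R_{\mathfrak{p}}$ is again a hypersurface of smaller dimension, $N_{\mathfrak{p}}$ has constant rank, and $(M\tensor_R N)_{\mathfrak{p}}$ remains reflexive over the Gorenstein local ring $R_{\mathfrak{p}}$. The inductive hypothesis therefore yields $\Tor^{R_{\mathfrak{p}}}_{>0}(M_{\mathfrak{p}},N_{\mathfrak{p}})=0$ for every non-maximal $\mathfrak{p}$, which forces each $\Tor^{R}_{i}(M,N)$ to have finite length. The base cases $\dim R\le 1$ are handled directly: a reflexive module over a one-dimensional hypersurface is torsion-free, and one reduces to Auslander's Theorem \ref{rigid} applied on the regular locus.

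Once every $\Tor^{R}_{i}(M,N)$ has finite length, the hypersurface structure takes over. Writing $R=S/(f)$ with $S$ regular local, minimal free resolutions over $R$ are eventually two-periodic, so Hochster's theta invariant
\[
\theta^{R}(M,N) := \ell\bigl(\Tor^{R}_{2i}(M,N)\bigr) - \ell\bigl(\Tor^{R}_{2i+1}(M,N)\bigr) \qquad (i\gg 0)
\]
is well-defined. An Euler-characteristic / intersection argument leveraging the constant rank of $N$, together with the depth bound $\depth_{R}(M\tensor_R N)\ge \min(\depth R,2)$ coming from reflexivity, should force $\theta^{R}(M,N)=0$. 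Combined with Murthy's rigidity for hypersurfaces (vanishing of two consecutive $\Tor$'s propagates forward), this gives $\Tor^{R}_{i}(M,N)=0$ for $i\gg 0$; a backward rigidity argument then propagates the vanishing down to $i=1$.

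The principal obstacle I expect is the step forcing $\theta^{R}(M,N)=0$ from the reflexivity of the tensor product. This is where the constant rank hypothesis is essential and where the restriction to hypersurfaces (rather than arbitrary complete intersections) appears to be unavoidable without a heavier tool such as the New Intersection Theorem, which the present paper will bring to bear in the higher-codimension setting.
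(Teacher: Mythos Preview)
The paper does not prove Theorem~\ref{secondrigidity}. It is quoted from Huneke and Wiegand \cite[Theorem~1]{HW3} (see also \cite{HW1}) and used as background input; the paper's own contribution begins with Theorem~\ref{Cx}, which assumes the vanishing of $\Tor^{R}_{>0}(M,N)$ rather than deriving it. So there is no in-paper proof against which to compare your proposal.

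That said, your sketch diverges from the actual Huneke--Wiegand argument and contains real gaps. First, the reduction ``replace $M$ and $N$ by high syzygies so they are maximal Cohen--Macaulay'' is not free: the hypothesis you must carry along is that $M\otimes_{R}N$ is reflexive, and there is no reason the tensor product of the syzygies inherits reflexivity from $M\otimes_{R}N$. Second, even granting $\theta^{R}(M,N)=0$ and Murthy's two-consecutive rigidity, you only get $\Tor^{R}_{i}(M,N)=0$ for $i\gg 0$; there is no general ``backward rigidity'' over a hypersurface that pushes this down to $i=1$. Huneke and Wiegand do not argue via $\theta$; they lift to the regular ring $S$ with $R=S/(f)$, use the long exact sequence coming from multiplication by $f$ to relate $\Tor^{R}$ and $\Tor^{S}$, and combine Auslander's results over $S$ with a careful torsion analysis (this is where constant rank enters, to kill torsion in $M\otimes_{R}N$ at the generic point). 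If you want to reconstruct the proof, that change-of-rings mechanism is the missing idea, not theta pairings or backward rigidity.
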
  

Theorem \ref{secondrigidity} is a substantial generalization of
Auslander's result, stated as Theorem \ref{rigid}, to hypersufrace
rings; over such rings, Serre's condition $(S_{2})$ is equivalent to
reflexivity, or being a second syzygy module; see (\ref{Snfact}).
Recall that $M$ is said to have {\em constant rank} if there exists an
integer $r$ such that $M_{p}\cong R_{p}^{(r)}$ for all associated
primes $p$ of $R$. For example, finitely generated modules of finite projective dimension have constant rank \cite[1.4.6]{BH}. The constant rank hypothesis in Theorem
\ref{secondrigidity} is necessary; see \cite[1.2(1)]{HW1}.

A nice consequence of Theorem \ref{secondrigidity} is that one of the modules considered must be reflexive. This fact is not explicitly stated in \cite{HW1} so we discuss this next; see the proof of \cite[2.7]{HW1} for more details. Note that $\depth(0)=\infty$; this is indeed the correct convention which is needed for several fundamental results in commutative algebra such as the \emph{depth lemma} \cite[1.2.9]{BH}; see also \cite{HW3}.

\begin{rmk} \label{dis} Let $R$, $M$ and $N$ be as in Theorem \ref{secondrigidity} with $M\neq 0 \neq N$. Assume that M is the module of constant rank, say of rank $r$. Then $r\neq 0$; otherwise $\zdr R M = M$ and this would imply that $\zdr R (M\otimes_{R}N) = M\otimes_{R}N$, which is not possible as $M\otimes_{R}N$ is torsion-free. It follows, as each prime ideal contains a minimal prime, that $\Supp(M)=\Spec(R)$. Furthermore the vanishing of $\Tor^{R}_{>0}(M,N)$ yields an equality of depths, known as the depth formula,
$\depth(M) + \depth(N) = \depth(R) + \depth(M \otimes_R N)$; see (\ref{DF}). Now, if $p\in \Supp(N)$, then one can localize the depth formula at $p$ and obtain an equality all of whose  terms are finite. As $\depth(R_{p})-\depth_{R_{p}}(M_{p})\geq 0$, we have:
$$\depth_{R_{p}}(N_{p})\geq \depth_{R_{p}}(N_{p} \otimes_{R_{p}} M_{p}) \geq \text{min} \{2, \htt(p) \}.$$ Therefore $N$ satisfies $(S_{2})$, that is, $N$ is reflexive; see (\ref{Sn}) and (\ref{Snfact}).
\end{rmk}

Our main aim in this paper is to generalize the observation we discussed in \ref{dis} to complete intersections of arbitrary codimension for modules that may not have constant rank. We obtain such a generalization in Theorem \ref{Cx}; a special case of our argument can be stated as follows:

\begin{thm} \label{resultintro} 
Let $R$ be a local complete intersection ring of codimension $c$ and let $M$ and $N$ be nonzero finitely generated $R$-modules. Assume that $M\otimes_{R}N$ satisfies $(S_{n+c})$ for some nonnegative integer $n$, equivalently $M\otimes_{R}N$ is an $(n+c)$th syzgy of some finitely generated $R$-module. If $\Tor^{R}_{>0}(M,N)=0$, then both $M$ and $N$ satisfy $(S_{n})$.
\end{thm}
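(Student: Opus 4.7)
The plan is to verify Serre's condition prime by prime. Because the complete intersection $R$ is Cohen--Macaulay, $\depth R_p = \htt p$ for every prime $p$, and the Evans--Griffith characterization makes $(S_k)$ equivalent to being a $k$th syzygy over such a ring; this makes the ``equivalently'' in the hypothesis free, and the task reduces to showing that $\depth M_p \geq \min\{n, \htt p\}$ for every $p \in \Supp(M)$ (and symmetrically for $N$). The hypothesis $\Tor^R_{>0}(M,N) = 0$ localizes, so the depth formula \ref{DF} applies at every prime in which both $M$ and $N$ have nonzero localization.

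For primes $p \in \Supp(M) \cap \Supp(N)$ the argument directly imitates the observation recorded in \ref{dis}. The depth formula gives
\[
\depth M_p + \depth N_p = \htt p + \depth(M \otimes_R N)_p.
\]
Since $N_p \neq 0$ over the Cohen--Macaulay local ring $R_p$, one has $\depth N_p \leq \htt p$; the $(S_{n+c})$ hypothesis on $M \otimes_R N$ gives $\depth(M \otimes_R N)_p \geq \min\{n+c, \htt p\}$. Combining these yields
\[
\depth M_p \geq \min\{n+c, \htt p\} \geq \min\{n, \htt p\},
\]
which is in fact stronger than what is needed. By symmetry the same bound holds for $N_p$.

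The difficult case, and the real point of the theorem, is $p \in \Supp(M) \setminus \Supp(N)$: here $N_p = 0$ forces $(M \otimes_R N)_p = 0$, so the $(S_{n+c})$ hypothesis is vacuous at $p$ and the depth formula cannot be invoked. This is where the New Intersection Theorem is meant to enter. The strategy is to combine three ingredients: the $(n+c)$th-syzygy realization of $M \otimes_R N$ (giving a bounded free coresolution); the $\Tor$-vanishing (so that tensoring a free resolution of $N$ against $M$ gives a free resolution of $M \otimes_R N$); and the fact that the minimal free resolution of $N$ localized at $p$ becomes a nontrivial exact complex of free $R_p$-modules. Splicing these should yield a finite free complex over $R_p$ whose homology is of finite length and is controlled by the codimension $c$, so that the NIT delivers a bound of the form $\htt p \leq n$, making $\min\{n, \htt p\} = \htt p$ and reducing the $(S_n)$ requirement for $M_p$ to the triviality $\depth M_p \geq 0$ (in the sense of $M_p$ nonzero finitely generated over the Cohen--Macaulay ring $R_p$).

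The main obstacle I expect is precisely this NIT step: making the auxiliary complex explicit enough, with a controlled length and a finite-length homology located on the right locus, so that the conclusion of the NIT is sharp enough to override the vanishing of the $(S_{n+c})$ hypothesis at primes outside $\Supp(M \otimes_R N)$. Everything else is a routine combination of the depth formula and the Cohen--Macaulayness of the complete intersection.
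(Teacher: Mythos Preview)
Your Case 1 (primes in $\Supp(M)\cap\Supp(N)$) is correct and matches the paper's argument: localize, apply the depth formula, and use $\depth N_p\le\dim R_p=\htt p$ to get $\depth M_p\ge\depth(M\otimes_R N)_p$.

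Case 2 has a genuine gap, and in fact two errors. First, your endgame is wrong: if you succeeded in proving $\htt p\le n$, then $\min\{n,\htt p\}=\htt p$ and the $(S_n)$ condition at $p$ demands $\depth M_p\ge\htt p$, i.e.\ that $M_p$ be maximal Cohen--Macaulay over $R_p$. That is not ``the triviality $\depth M_p\ge 0$''; it is exactly as hard as the original problem. Second, the proposed NIT construction does not do what you say. Tensoring a free resolution of $N$ with $M$ does not produce a free resolution of $M\otimes_R N$; it produces a resolution by direct sums of copies of $M$. Localizing at $p\notin\Supp(N)$ makes the resolution of $N$ split-exact, which gives no finite free complex with finite-length homology to feed into the New Intersection Theorem. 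There is no evident way to splice the syzygy coresolution with these pieces to force $\htt p\le n$, and indeed that inequality need not hold.

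The paper avoids bounding $\htt p$ altogether. With roles swapped to match your notation (showing $M$ satisfies $(S_n)$, $p\in\Supp(M)\setminus\Supp(N)$), choose a prime $q$ minimal over $p+\Ann_R(N)$, so $q\in\Supp(M)\cap\Supp(N)$. Apply the Sharif--Yassemi form (\ref{NIThm}.1) of the New Intersection Theorem to the pair $(N_q,(R/p)_q)$ over $R_q$: since $\Supp_{R_q}(N_q\otimes_{R_q}R_q/pR_q)=\{qR_q\}$, this yields $\dim(R_q/pR_q)\le\qpd_{R_q}(N_q)$. Combine with the standard inequality $\depth_{R_p}(M_p)\ge\depth_{R_q}(M_q)-\dim(R_q/pR_q)$ to get
\[
\depth_{R_p}(M_p)\ge\depth_{R_q}(M_q)-\qpd_{R_q}(N_q).
\]
Now use $\qpd=\CI+\cx$ together with the Auslander--Buchsbaum formula for $\CI$ and the depth formula at $q$ to rewrite $\qpd_{R_q}(N_q)=\depth_{R_q}(M_q)-\depth_{R_q}((M\otimes_R N)_q)+\cx_{R_q}(N_q)$. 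This gives
\[
\depth_{R_p}(M_p)\ge\depth_{R_q}((M\otimes_R N)_q)-\cx_{R_q}(N_q)\ge\min\{n+c,\htt q\}-c,
\]
since $\cx_{R_q}(N_q)\le c$. When $\htt q>n+c$ this is $\ge n$; when $\htt q\le n+c$ one falls back on Case~1 at $q$. The point is that the NIT is applied not to an ad hoc complex but to $N$ itself via its quasi-projective dimension, and its role is to bound the ``distance'' from $p$ up to a prime $q$ in the common support, not to bound $\htt p$.
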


The main ingredient that makes the argument in Remark \ref{dis} work is the vanishing of $\Tor^{R}_{>0}(M,N)$, which follows from the fact that $M$ or $N$ has constant rank over a hypersurface.  We should note that we do not know whether the reflexivity of the tensor product of two finitely generated modules, either of which has constant rank, implies the vanishing of $\Tor^{R}_{>0}(M,N)$ over complete intersections that are not hypersurfaces.  Therefore, in order to generalize the observation of Remark \ref{dis} to complete intersection rings, it is reasonable to assume the vanishing of such Tor modules, as we do in Theorem \ref{Cx}. Since modules over domains have constant rank, it seems worth posing the following question; see also \cite[3.14]{Ce}.

\begin{ques} Let $R$ be a complete intersection \emph{domain} and let $M$ and $N$ be nonzero finitely generated $R$-modules. If $M\otimes_{R}N$ is reflexive, then must $\Tor^{R}_{>0}(M,N)=0$? What if $M$, $N$ and $M\otimes_{R}N$ are all maximal Cohen-Macaulay?
\end{ques}

Theorem \ref{Cx} is stated and proved for homologically bounded complexes.  We use this generality for two reasons: one of the main ingredients of our proof, a generalization of the New Intersection Theorem, was first developed for complexes \cite[3.3]{SY2}; see also (\ref{NIThm}.1).  Secondly, a reader interested merely in modules may follow our arguments word for word by simply replacing each $\inf$ and $\sup$ by zero, so there is no penalty for this extra generality.

We are also interested in analyzing the torsion of \emph{both} of the modules $M$ and $N$ considered in the Second Rigidity Theorem; see Theorem \ref{secondrigidity}. We observed in \ref{dis} that the support of $M$ or $N$ is the whole spectrum of the ring so that one can use the depth formula and easily deduce the reflexivity of either of those modules; see (\ref{DF}). One can also see, following the arguments given in the proof of the Second Rigidity Theorem, that  \emph{both} of the modules considered do satisy $(S_{1})$, that is, both are torsion-free modules; see the discussion following Corollary \ref{cor3}.  Indeed our main argument shows that this fact carries over to complete intersection rings of arbitrary codimension; see Corollary \ref{corfinal}. 

We should note that, if $M$ and $N$ are arbitrary nonzero finitely generated modules over a complete intersection ring $R$ that is not a hypersurface, then the vanishing of $\Tor^{R}_{\gg 0}(M,N)$ does not necessarily imply that $M$ or $N$ has finite projective dimension; for an example see \cite[4.2]{Jor}. Therefore, when $M\otimes_{R}N$ is reflexive and $\Tor^{R}_{>0}(M,N)=0$, the support of both $M$ and $N$ may be a proper subset of $\Spec(R)$, and hence one cannot repeat the arguments of \ref{dis} to examine the reflexivity of $M$ or $N$: the depth formula does not immediately yield any information because localizing the depth formula at a prime ideal that is not in the support of \emph{both} $M$ and $N$ yields the void equality $\infty=\infty$; see also \cite{HW3}. We are able to overcome this difficulty by using the New Intersection Theorem in the proof of Theorem \ref{Cx}; see also Corollaries \ref{cor2} and \ref{corintro}.

One can also ask whether there exists an analogous observation to that of \ref{dis} for the torsion-free tensor products of modules:

\begin{ques} \label{impossible} Let $R$ be a hypersurface \emph{domain} and let $M$ and $N$ be nonzero finitely generated $R$-modules. If $M\otimes_{R}N$ is torsion-free, then must $M$ or $N$ be torsion-free? 
\end{ques}

One can find several interesting results related to Question \ref{impossible} in \cite{CIPW} where it was studied in detail via different techniques. 

\section{Definitions and Preliminary Results}

Our proofs use computations
in the derived category over commutative Noetherian rings.  Again, our 
arguments may be read just for modules by simply replacing each
$\inf$ and each $\sup$ by zero.  However, for readers interested in 
more details for the hyperhomology techniques, we
suggest \cite[Appendix]{Larsbook} or \cite{Larsbook2}.  

A local ring $(R,\fm,k)$ is a \emph{complete intersection} if the
defining ideal of some (equivalently every) Cohen presentation of the
$\fm$-adic completion $\widehat{R}$ of $R$ can be generated by a
regular sequence. If $R$ is such a ring, then $\widehat{R}$ has the
form $Q/(\underline{x})$, where $\{\underline{x}\}$ is a $Q$-regular
sequence and $Q$ is a ring of formal power series over the residue
field $k$, or over a complete discrete valuation ring with residue
field $k$.  The \emph{codimension} of $R$ is the nonnegative integer
$\dim_{k}(\fm/\fm^2)-\dim(R)$. A complete intersection of codimension
one is called a \emph{hypersurface}.  We should note that Huneke and
R.~Wiegand define a hypersurface ring in \cite{HW1} as a quotient of
a regular local ring by a nonzero element. Although this differs from
our definition, 
the difference does not affect our results; constant rank and the
$(S_{n})$ conditions are preserved under completion; see the
discussion following Theorem \ref{secondrigidity} and (\ref{Snfact}).

Let $R$ be a commutative Noetherian ring. We grade complexes
$(M,\partial^{M})$ of $R$-modules homologically.  The $n$th homology
module of $M$ is denoted by $\Ho_n(M)$. The \emph{supremum} and
\emph{infimum} of $M$ are given by
 \begin{equation*}
 \sup(M)=\sup\{i\in \ZZ: \Ho_{i}(M)\neq 0\} \text{ and }
 \inf(M)=\inf\{i\in \ZZ: \Ho_{i}(M)\neq 0\}. 
 \end{equation*}
 $M$ is \emph{homologically bounded} provided that both $\sup(M)$ and
 $\inf(M)$ are finite.
 
\begin{chunk} \textbf{Conventions.} \label{why} Through the paper $R$
  denotes a local ring, that is, a commutative Noetherian ring with
  unique maximal ideal $\fm$ and residue class field $k=R/\fm$. All
  modules considered over $R$ are finitely generated. By an $R$-complex, we mean a
  complex of finitely generated $R$-modules which is always assumed to
  be homologically bounded.
\end{chunk}

\begin{chunk} \textbf{Support and Dimension.} \label{dim} %
  The \emph{annihilator} and \emph{support} of an $R$-complex $M$ are
  defined to be the following:
\begin{align*} \notag 
\Ann(M) =\bigcap_{i\in \ZZ} \Ann(\Ho_{i}(M))  \text{ and }
\Supp(M)=\bigcup_{i\in \ZZ}\Supp(\Ho_{i}(M)).
\end{align*}
The (Krull) dimension of an $R$-complex $M$ is defined as
\begin{align*} \notag 
\dim(M) &= \sup \{ \dim(R/p) - \inf(M_{p}): p\in \Supp(M)  \}  =\sup\{\dim_{R}\Ho_{i}(M)-i: i \in \ZZ\}.
\end{align*}
These definitions extend the usual concepts for modules, see
\cite[A.8.4]{Larsbook}.
\end{chunk}

\begin{chunk} \textbf{Depth.} \label{depth} %
The \emph{depth} of an $R$-complex $M$ is defined by
\begin{equation} \tag{\ref{depth}.1}
\depth(M) = n - \sup(K \ltensor_R M).
\end{equation} 
Here $K$ is the Koszul complex on a set of generators $x_1, \ldots,
x_n$ of $\fm$; see \cite[\S 2]{I} and \cite[(A.6.1)]{Larsbook}.  It is
known that this definition is independent of the choice of generators
of $\fm$ \cite[1.3]{I}. Notice, by our convention for $R$-complexes
(\ref{why}), $\depth(0)=\infty$ and $\depth(M)$ is finite if $\Ho(M)
\neq 0$.
\end{chunk}

The Koszul complex $K$ is a finite free complex; hence $K \tensor_R M
\simeq K \ltensor_R M$. For any fixed choice of generators of
$\mathfrak{m}$, the quantity $n$ and the collection $\{ H_i(K
\tensor_R M) \colon i \in \bZ \}$ of homology modules of $K \tensor_R
M$ are invariant under suspension of $M$, but the depth is not: the suspension $\susp M$ of $M$ satisfies $\depth(\susp M) = \depth(M) - 1$.  

\begin{chunk} \textbf{Serre's Condition.}\label{Sn} %
  We say an $R$-complex $M$ satisfies \emph{Serre's condition}
  $(S_{n})$ provided that the following inequality holds for all $p
  \in \Supp(M)$:
\begin{equation*} 
\depth_{R_{p}}(M_p) + \inf(M_p) \geq \min\{n, \htt(p) \}.  
\end{equation*}

When $M$ is a finitely generated $R$-module, this definition agrees
with the classical one defined by Evans and Griffith \cite{EG}, that
is, $\depth(M_p) \geq \min \{n, \htt(p) \}$ for all $p \in
\Supp(M)$. Note that their definition for Serre's condition for
finitely generated modules \emph{differs} from the one given in
\cite[5.7.2.I]{EGA} and \cite[page 62]{BH}.
\end{chunk}

We record the following facts about Serre's condition $(S_{n})$.

\begin{chunk} \textbf{Facts.} \label{Snfact} (\cite[3.8]{EG},
  \cite[1.7]{Har2}, \cite[1.3]{LW} and \cite[Proposition 6]{Sam}) 
Assume $R$ is a Gorenstein local ring and $M$ is a finitely generated
$R$-module. 
\begin{enumerate}[(i)]
\item The following conditions are equivalent:
\begin{enumerate}
\item $M$ satisfies $(S_{n})$.
\item $M$ is an $n$th syzygy module.
\item All $R$-regular sequences of length at most $n$ are also $M$-regular.
\end{enumerate}

\item $M$ is \emph{torsion-free}, that is, the natural map $M \to
  \Hom(\Hom(M,R), R)$ is injective, if and only if $M$ satisfies
  $(S_{1})$.
\item $M$ is \emph{reflexive}, that is, the natural map $M \to
  \Hom(\Hom(M,R), R)$ is bijective, if and only if $M$ satisfies
  $(S_{2})$.
\item If $R\to S$ is a flat ring map and $M$ satisfies $(S_{n})$ over
  $R$, then $M\otimes_{R}S$ satisfies $(S_{n})$ over $S$.
\end{enumerate}
\end{chunk}

\begin{chunk} \textbf{Projective Dimension.} \label{pd} %
  Recall that the $n$th \emph{Betti number} of an $R$-complex $M$ is
  defined as $\beta_n^R(M) \coloneqq \dim_k(\Tor_{n}^R(M,
  k))=\dim_k(\Ho_{n}(M\ltensor_{R}k))$. The \emph{projective
    dimension} of $M$ can be given as follows
  \cite[(A.5.3)]{Larsbook}:
\begin{equation} \notag{}
\pd(M)=\sup\{n\in\ZZ: \beta_n^R(M)\neq 0 \}. 
\end{equation}
\end{chunk}

\begin{chunk} \textbf{Quasi-projective and Complete Intersection
    Dimension.} \label{qCI} 
  A diagram of local ring maps $R \to R' \twoheadleftarrow Q$ is
  called a {\em quasi-deformation} provided that $R \to R'$ is flat
  and the kernel of the surjection $R' \twoheadleftarrow Q$ is
  generated by a $Q$-regular sequence, see \cite[1.1]{AGP}.

  The \emph{quasi-projective dimension} of an $R$-complex $M$
  \cite[3.3]{Av1}, see also \cite[3.1]{SY2}, is:
\begin{equation*} 
  \qpd_R(M) = \inf\{ \pd_Q(M \ltensor_R R') \ | \  R \to R'  \twoheadleftarrow
  Q \ {\text{is a quasi-deformation}} \}.
\end{equation*}

The \emph{complete intersection dimension} of $M$, see \cite[1.2]{AGP}
and \cite[3.1]{Sean}, is quite similar:
\begin{equation*} 
\CI_R(M) = \inf\{ \pd_Q(M \ltensor_R R') - \pd_Q(R') \ | \ 
R \to R' \twoheadleftarrow Q \ {\text{is a quasi-deformation}}\}.
\end{equation*}

Notice $\CI(M)<\infty$ if and only if $\qpd_R(M)<\infty$. If $R$ is a
complete intersection, then $\CI(M)<\infty$
\cite[1.3]{AGP}. Furthermore, by \cite[3.3]{Sean}, we have:
\begin{equation} \tag{\ref{qCI}.1}
\sup(M)\leq\CI(M)\leq \pd(M).  
\end{equation}
\end{chunk}

\begin{chunk} \textbf{Derived Depth Formula.} \label{DF} 
  Let $M$ and $N$ be $R$-complexes. We say that the pair $(M,N)$
  satisfies the \emph{derived depth formula} provided the following
  equality holds:
\begin{equation} \notag{}
\depth(M) + \depth(N) = \depth(R) + \depth(M \ltensor_R N).  
\end{equation}
Christensen and Jorgensen proved in \cite[5.4(a)]{CJ} that if
$\Tor_{\gg 0}^{R}(M,N)=0$ and $M$ has finite complete intersection
dimension, see (\ref{qCI}), then the pair of complexes $(M,N)$
satisfies the derived depth formula.

The depth formula, for the tensor product of finitely generated modules, 
is initially due to Auslander \cite{Au}; see also Foxby \cite{Foxby} and Iyengar \cite{I}.
\end{chunk}

\begin{chunk} \textbf{Complexity.} \label{cx} %
  The \emph{complexity} of an $R$-complex $M$ is a measure for the
  polynomial growth rate of its Betti numbers and is defined as
  follows, see \cite[4.2]{Av2} and \cite[5.1]{AGP}.  
\begin{equation*}
  \cx_R(M) = \inf \{ r \in \bN \ | \ \exists \beta \in \bR {\text{ so
      that }} \beta_n^R(M) \leq \beta 
  n^{r-1} \ \forall n \gg 0 \} 
\end{equation*}

Note that $\cx(M)=0$ if and only if $\pd(M)<\infty$. Moreover, the
finiteness of complete intersection dimension implies the finiteness
of complexity; this is essentially due to Gulliksen
\cite[3.3]{Gul2}. More precisely, when $\CI_{R}(M)<\infty$, the
complexity $\cx(M)$ cannot exceed $\edim(R)-\depth(R)$
\cite[3.10(v)]{Sean}.
\end{chunk}

\begin{chunk} \label{qpdviaCI} %
  Avramov-Gasharov and Peeva \cite[5.11]{AGP} obtained a relationship
  between quasi-projective and complete intersection dimension using
  complexity: if $M$ is an $R$-complex, then the following equality
  holds:
\begin{equation*} 
\qpd_R(M)=\CI(M)+\cx_{R}(M).
\end{equation*}

This equality was originally obtained in \cite{AGP} for finitely
generated modules. Its proof mainly depends on a factorization result
of quasi-deformations \cite[5.9]{AGP} which can be restated for
complexes. A proof of this equality for $R$-complexes follows the
arguments in \cite[5.9]{AGP} using the analogous results in
\cite[\S5]{AvSun} and \cite{Sean}.
\end{chunk}

\begin{chunk} \textbf{Auslander-Buchsbaum Formula.}\label{ABF} %
  Complexes of finite projective dimension also have finite complete
  intersection dimension. The finiteness of either of these
  homological dimensions implies an equality, known as the
  \emph{Auslander-Buchsbaum formula}: if $M$ is an $R$-complex and if
  $\pd(M)<\infty$ (respectively, if $\CI(M)<\infty$), then $\pd_R(M) =
  \depth(R) - \depth(M)$ (respectively, $\CI(M) = \depth(R) -
  \depth(M)$).

  If $\qpd_R(M)<\infty$, then the Auslander-Buchsbaum formula and
  (\ref{qpdviaCI}) give that:
\begin{equation} \tag{\ref{ABF}.1}
\qpd_R(M)=\depth(R) - \depth(M)+\cx_{R}(M).
\end{equation}
\end{chunk}

\begin{chunk} \textbf{New Intersection Theorem.} \label{NIThm} %
  The \emph{New Intersection Theorem} of Hochster \cite{Hochster},
  Peskine and Szpiro \cite{PS} and Roberts \cite{R1}, \cite{R2} states
  that, if $M$ and $N$ are finitely generated $R$-modules, then:
\begin{equation} \notag{}
\dim_{R}(N)\leq \pd_{R}(M)+\dim_{R}(M\otimes_{R}N).
\end{equation}
This inequality can also be given for $R$-complexes, see for example
\cite[\S 18]{Larsbook2}.

If $M$ and $N$ are $R$-complexes with $\Ho(M) \neq 0$ and
$\qpd(M)<\infty$, then Sharif and Yassemi \cite[3.3]{SY2}, see also
(\ref{qCI}), generalized the New Intersection Theorem as follows:
\begin{equation}  \tag{\ref{NIThm}.1}
\dim_{R}(N)\leq \qpd(M)+\dim_{R}(M\otimes_{R}N).
\end{equation}
\end{chunk}

We finish this section by recording some inequalities that will be
used later.

\begin{chunk} \label{ineq} %
  Let $M$ and $N$ be $R$-complexes such that $\CI_R(M)< \infty$ and
  $(M,N)$ satisfies the derived depth formula. Then it follows from
  (\ref{ABF}.1) and (\ref{DF}) that:
\begin{align*}
\qpd_{R}(M) & = \depth(R) - \depth(M)+\cx_{R}(M)   \\
& = \depth(N) - \depth(M \ltensor_{R} N) +\cx_{R}(M).  
\end{align*}
\end{chunk}

\begin{chunk} \label{ineqLars} %
  Let $M$ and $N$ be $R$-complexes. Then, by \cite[(A.6.2)]{Larsbook},
  we have 
\begin{equation*} 
  \depth_{R}(N) \leq \depth_{R_{p}}(N_{p})+ \dim(R/p)  \text{ for all
  } p\in \Spec(R).  
\end{equation*}
\end{chunk}

\section{An application of the New Intersection Theorem}

One can use the depth formula, see (\ref{DF}), 
to make local conclusions about either
$M$ or $N$ \emph{only} on the intersection of their supports
\cite{HW3}.  Localizing the depth formula at a prime ideal that is
outside of the support of $M$ or $N$ yields a void equality;
$\depth(0)=\infty$, see (\ref{why}), (\ref{depth}) and also \cite[3.3]{HW1}.

In Theorem \ref{Cx} we make use of the depth formula (\ref{DF}) and
analyze the depth of the derived tensor product $M\ltensor_{R}N$ of
$R$-complexes. Our aim is to determine certain conditions so that, if
$M\ltensor_{R}N$ satisfies $(S_{n+r})$, where $r=\cx_{R}(M)$, then $N$
satisfies $(S_{n})$, see also (\ref{Sn}) and (\ref{cx}).  We combine
an argument, analogous to that of Yoshida, see his proof \cite[4.1, Case 2]{Yoshida},
with the inequality (\ref{NIThm}.1), due to Sharif and Yassemi
\cite{SY2}, that generalizes the New Intersection Theorem. 
The result we obtain is, to our knowledge, new, even for finitely generated modules.

\begin{thm} \label{Cx} 
  Let $R$ be a local ring, $M$ and $N$ be $R$-complexes with
  $\Ho(M)\neq 0$ and let $n$ be a nonnegative integer. Set
  $r=\cx_{R}(M)$ and assume the following conditions:
\begin{enumerate} [(i)]
\item $\CI_{R}(M)<\infty$,
\item $M\ltensor_{R}N$ satisfies $(\ds{S_{n+r}})$, and
\item $\Tor_{\gg 0}^{R}(M,N)=0$.
\end{enumerate}
Then $N$ satisfies $(S_{n})$.
\end{thm}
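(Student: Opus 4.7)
The plan is to verify $(S_n)$ for $N$ pointwise: for each $p \in \Supp(N)$, we must show
\[
\depth_{R_p}(N_p) + \inf(N_p) \;\geq\; \min\{n,\htt(p)\}.
\]
All three hypotheses---$\CI(M)<\infty$, $\Tor_{\gg 0}(M,N)=0$ and the $(S_{n+r})$ condition on $M\ltensor_R N$---are preserved under localization, and by Christensen-Jorgensen \cite[5.4(a)]{CJ} the pair $(M,N)$ satisfies the derived depth formula of (\ref{DF}). The argument splits according to whether $p\in\Supp(M)$.

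Suppose first $p\in\Supp(M)\cap\Supp(N)$. A local Nakayama argument, via the spectral sequence whose bottom corner $H_{\inf(M_p)}(M_p)\tensor_{R_p} H_{\inf(N_p)}(N_p)$ is nonzero, shows $p\in\Supp(M\ltensor_R N)$ and yields the equality $\inf((M\ltensor_R N)_p) = \inf(M_p)+\inf(N_p)$. The depth formula at $p$ combined with the Auslander-Buchsbaum formula of (\ref{ABF}) gives
\[
\depth_{R_p}(N_p) \;=\; \CI_{R_p}(M_p) + \depth_{R_p}((M\ltensor_R N)_p).
\]
Substituting the $(S_{n+r})$ bound for $\depth_{R_p}((M\ltensor_R N)_p)$ and using $\CI_{R_p}(M_p)\geq \sup(M_p)\geq \inf(M_p)$ from (\ref{qCI}.1), we obtain $\depth_{R_p}(N_p)+\inf(N_p) \geq \min\{n+r,\htt(p)\} \geq \min\{n,\htt(p)\}$.

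The remaining case, $p\in\Supp(N)\setminus\Supp(M)$, is harder, since $M_p\simeq 0$ renders the derived depth formula vacuous at $p$. Here the generalized New Intersection Theorem (\ref{NIThm}.1) is essential. The plan is: apply the previous case at the maximal ideal $\fm$---which always lies in $\Supp(M)\cap\Supp(N)$, since $\Ho(M)\neq 0\neq \Ho(N)$---to obtain $\depth(N)+\inf(N)\geq \min\{n+r,\dim R\}$, then transfer this to $p$ using (\ref{ineqLars}), namely $\depth(N)\leq \depth_{R_p}(N_p)+\dim(R/p)$. When $\dim R\leq n+r$ the resulting bound alone yields $\depth_{R_p}(N_p)+\inf(N_p)\geq \htt(p)\geq \min\{n,\htt(p)\}$. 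Otherwise we invoke the NIT estimate $\dim(R/p)-\inf(N_p)\leq \dim(N)\leq \qpd(M)+\dim(M\ltensor_R N)$ together with the identity $\qpd(M)=\depth(N)-\depth(M\ltensor_R N)+r$ from (\ref{ineq}) to close the remaining arithmetic gap.

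The main obstacle is this second case: primes in $\Supp(N)\setminus\Supp(M)$ admit no direct depth-formula bridge, and the generalized NIT serves precisely as the tool that converts the global dimension inequality into a local depth bound at $p$. Handling the $\inf$-terms for complexes uniformly---controlled throughout by the single inequality $\CI\geq \sup$ of (\ref{qCI}.1)---is what extends the statement from modules to homologically bounded complexes.
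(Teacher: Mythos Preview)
Your first case ($p\in\Supp(M)\cap\Supp(N)$) is fine and matches the paper's argument. The gap is in your second case, in the ``otherwise'' subcase where $\dim R>n+r$ and $p\notin\Supp(M)$. Applying (\ref{NIThm}.1) globally to the pair $(M,N)$ does not close the arithmetic. Combining your three ingredients yields only
\[
\depth_{R_p}(N_p)+\inf(N_p)\;\geq\;\depth(N)-\dim(R/p)+\inf(N_p)\;\geq\;\depth(M\ltensor_R N)-\dim(M\ltensor_R N)-r,
\]
and since $\depth\leq\dim$ for any nonzero complex, the right side is at most $-r$. Even for modules this bound is vacuous. The problem is that $\dim(M\ltensor_R N)$ is uncontrolled; the $(S_{n+r})$ hypothesis is a depth condition and says nothing about it.

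The paper's remedy is to apply the New Intersection Theorem not to $(M,N)$ over $R$ but to a carefully chosen local pair: pick $q$ minimal over $p+\Ann_R(M)$, so that $q\in\Supp(M)\cap\Supp(N)$, and apply (\ref{NIThm}.1) to $(M_q,(R/p)_q)$ over $R_q$. The point of this choice is that $\Supp_{R_q}\big(M_q\ltensor_{R_q}(R/p)_q\big)=\{qR_q\}$, whence $\dim_{R_q}\big(M_q\ltensor_{R_q}(R/p)_q\big)=-\inf(M_q)$ exactly. This converts the NIT inequality into a sharp bound $\dim_{R_q}(R_q/pR_q)\leq \qpd_{R_q}(M_q)-\inf(M_q)$, which, fed through (\ref{ineqLars}) and (\ref{ineq}) at $q$, gives $\depth_{R_p}(N_p)+\inf(N_p)\geq \min\{n+r,\htt(q)\}-r$. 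A further split on whether $\htt(q)\leq n+r$ (reduce to Case~1 over $R_q$) or $\htt(q)>n+r$ finishes the argument. Your proposal is missing this auxiliary prime $q$ and the application of NIT to $(M_q,(R/p)_q)$; that is the idea that makes the second case go through.
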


\begin{proof} We shall prove that, for all $p \in \Supp(N)$, the
  following inequality holds:
\begin{equation} \label{depth for N} \tag{\ref{Cx}.1} 
\depth_{R_{p}}(N_{p})+\inf(N_{p}) \geq \min\{n, \htt(p) \}.  
\end{equation}

We start by recording some observations. Note that, by (iii),
$M\ltensor_{R}N$ satisfies our convention to be an $R$-complex, see
(\ref{why}). Notice also that the complexity, Serre's condition,
finiteness of complete intersection dimension, the vanishing of
$\Tor^{R}_{\gg 0}(M,N)$ as well as the conclusion of the theorem are
invariant under suspension of $M$, see (\ref{Sn}), (\ref{qCI}) and
(\ref{cx}). Therefore we may assume that $\inf(M)=0$. Furthermore, for
all $q\in \Supp(M)$, the following inequality holds, see (\ref{qCI})
and (\ref{ABF}).
\begin{equation}  \notag 
0=\inf(M)\leq \inf(M_{q}) \leq \sup(M_{q}) \leq
\CI_{R_{q}}(M_{q})=\depth(R_{q})-\depth(M_{q}) 
\end{equation}
In particular, setting $q=\fm$, we see that $\depth(M)\leq
\depth(R)$.

The hypotheses imply that $(M,N)$ satisfies the derived depth formula
(\ref{DF}); hence
\begin{equation}  
\label{Cx DF} \notag 
\depth(M) + \depth(N) = \depth(R) + \depth(M \ltensor_{R} N).  
\end{equation}
Depth formula 
together with the fact that $\depth(M)\leq
\depth(R)$ gives
\begin{equation}  \tag{\ref{Cx}.2}
\depth(N) \geq \depth(M \ltensor_{R} N)
\end{equation}
We know, by \cite[(A.4.16)]{Larsbook}, that
$\inf(M\ltensor_{R}N)=\inf(M)+\inf(N)=\inf(N)$. Hence (ii) and
(\ref{Cx}.2) show that 
\begin{align*}  \notag 
\depth(N)+\inf(N) & \geq  \depth(M\ltensor_{R}N)+\inf(M\ltensor_{R}N)
\geq \min\{n+r, \dim(R) \}. 
\end{align*}
This yields, for any $p\in \Supp(N)$, the following inequalities:
\begin{align*} 
\depth_{R_{p}}(N_{p})+\inf(N_{p}) & \geq \depth_{R_{p}}(N_{p})+\inf(N)
\\  \tag{\ref{Cx}.3} 
& \geq \depth(N)+\inf(N) -\dim(R/p) \\ 
& \geq \min\{n+r, \dim(R) \} -\dim(R/p). 
\end{align*}
Here the second inequaliy follows from
(\ref{ineqLars}).  

Let $p\in \Supp(N)$. We now divide our proof into two main cases: 
\begin{enumerate}
\item $\dim(R)\leq n+r$ or $p \in \Supp(M)$.
\item $\dim(R)> n+r $ and $p\not \in \Supp(M)$.
\end{enumerate}

\noindent {\sc{Case 1}}. %
Assume $p\in \Supp(M)$.  Then $p\in \Supp(M\ltensor_{R}N)=\Supp(M)
\cap \Supp(N)$ and hence $\Ho(M_{p})\neq 0\neq \Ho(N_{p})$. The
hypotheses and the conclusion are preserved under localization at
$p$. Hence we may consider the pair $(M_{p}, N_{p})$ over $R_{p}$. 
Therefore (\ref{Cx}.3) yields
\begin{equation*}  
\depth(N)+\inf(N) \geq  \depth(M\ltensor_{R}N)+\inf(M\ltensor_{R}N)
\geq \min\{n+r, \dim(R) \},   
\end{equation*}
which justifies (\ref{depth for N}).  Now assume $\dim(R) \leq
n+r $.  Then it follows from (\ref{Cx}.3) that
\begin{align*} 
\depth_{R_{p}}(N_{p})+\inf(N_{p}) & \geq \min\{n+r, \dim(R) \} -\dim(R/p) \\
& \geq \dim(R)-\dim(R/p) \\
& \geq \htt(p).  
\end{align*}
Therefore we have established (\ref{depth for N}) in this case, too.  
\vspace*{1ex}

\noindent {\sc{Case 2}}. %
Assume $p \notin \Supp(M)$ and that $\dim(R)> n+r $. Recall that $p\in
\Supp(N)$. Let $q$ be a minimal prime ideal of $p + \Ann_R(M)$. Then
$q\in \Supp(M\ltensor_{R}N)$; see (\ref{dim}).

Suppose first that $\htt(q)\leq n+r$. Then, since $\Ho(M_{q})\neq
0\neq \Ho(N_{q})$, we can consider the pair $(M_{q}, N_{q})$ over
$R_{q}$ and deduce from {\sc{Case 1}} that (\ref{depth for N})
holds. Therefore we proceed under the assumption that
$ \htt(q)> n+r$.  

The New Intersection Theorem (\ref{NIThm}.1), applied to the pair
$(M_{q}, (R/p)_{q})$ over the ring $R_{q}$, gives 
\begin{equation} \label{NIT} \notag 
  \qpd_{R_q} (M_q) + \dim_{R_{q}}(M_q \ltensor_{R_q} R_q/pR_{q}) \geq
  \dim_{R_q}( R_q/pR_{q}).  
\end{equation}
Furthermore the inequality (\ref{ineqLars}) applied to the $R_{q}$
complex $N_{q}$ yields that
\begin{equation} \notag 
  \depth_{R_p}(N_p) \geq \depth_{R_q}(N_q) - \dim_{R_q}(R_q/p_q).
\end{equation}
These last two observations imply
\begin{equation} \label{NIT2} \tag{\ref{Cx}.4} %
  \depth_{R_p}(N_p) \geq \depth_{R_q}(N_q) - \big( \qpd_{R_q} (M_q) +
  \dim_{R_q}(M_q \ltensor_{R_q} R_q/pR_q) \big).
\end{equation}
Note that $\Supp_{R_{q}}(M_q\ltensor_{R_q}
R_q/pR_q)=\{qR_{q}\}$.  Hence, by (\ref{dim}), we deduce
\begin{align*} \label{NIT2} \notag 
  \dim(M_q\ltensor_{R_q} R_q/pR_q) 
  & =\dim(R_{q}/qR_{q})-\inf\big((M_q\ltensor_{R_q} R_q/pR_q)_{qR_{q}}
  \big)
  =-\inf(M_{q}).  
\end{align*}
Therefore, substituting this into (\ref{Cx}.4), we have
\begin{equation} \label{NIT3} \tag{\ref{Cx}.5}
 \depth_{R_p}(N_p)  \geq   \depth_{R_q}(N_q) - \qpd_{R_q} (M_q)  +
 \inf(M_{q}).  
\end{equation}
Note that (\ref{ineq}) gives 
\begin{equation} \label{NIT4} \tag{\ref{Cx}.6} 
\qpd_{R_q} (M_q) = \depth_{R_q}(N_q) - \depth_{R_q}(M_q\ltensor_{R_q}
N_q) + \cx(M_q).  
\end{equation} 
Combining (\ref{NIT3}) with (\ref{NIT4}), we obtain
\begin{equation*}
  \depth_{R_p}(N_p) \geq   \depth_{R_q}(N_q) + \inf(M_{q}) -
  \big(\depth_{R_q}(N_q) - \depth_{R_q}(M_q   \ltensor_{R_q} N_q) + \cx(M_q) \big),
\end{equation*} 
which simplifies to 
\begin{equation} \label{main aim} \notag 
  \depth_{R_p}(N_p) \geq \depth_{R_q}(M_q \ltensor_{R_q} N_q) -
  \cx(M_q) + \inf(M_{q}).  
\end{equation} 
It is clear that $\cx_{R_{q}}(M_q) \leq \cx_{R}(M)=r$ and $\inf(M_q) +
\inf(N_q) = \inf(M_q \ltensor_{R_q} N_q)$. Moreover, $\inf(N_p) \geq
\inf(N_q)$.  Hence,
\begin{equation}  \label{final inequality} \tag{\ref{Cx}.7} %
 \begin{split} \hspace*{-2em}
 \depth_{R_p}(N_p) + \inf(N_p) 
 & \geq \depth_{R_p} (N_p) + \inf(N_q) \\
 & \geq \depth_{R_q}(M_q \ltensor_{R_q} N_q) - \cx(M_q) + \inf(M_q) +
 \inf(N_q) \\
 & = 
 \depth_{R_q}(M_q \ltensor_{R_q} N_q) + \inf(M_q \ltensor_{R_q} N_q) -
 \cx(M_q) \\
 & \geq \min\{n+r, \htt(q)\} - r = n, 
 \end{split}
\end{equation}
where the last equality follows from the fact that $\htt(q) > n + r$.
Thus the inequality (\ref{depth for N}) holds and hence $N$ satisfies $(S_{n})$.
\end{proof}

\section{Applications of Theorem \ref{Cx}}

\begin{cor} \label{cor1} 
Let $R$ be a local ring, $M$ and $N$ nonzero finitely generated
$R$-modules and let $n$ be a nonnegative integer.  Set
$r=\cx_{R}(M)$ and assume the following conditions hold:
\begin{enumerate}[(i)]
\item $\CI_{R}(M)<\infty$, 
\item $M\otimes_{R}N$ satisfies $(\displaystyle{S_{n+r}})$, and 
\item $\Tor_{>0}^{R}(M,N)=0$.
\end{enumerate}
Then $N$ satisfies $(S_{n})$.
\end{cor}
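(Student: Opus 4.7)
My plan is to deduce this corollary directly from Theorem \ref{Cx} by regarding the finitely generated modules $M$ and $N$ as $R$-complexes concentrated in degree zero. Since $M \neq 0$ as a module, $\Ho(M) \neq 0$, so $M$ is a legitimate complex under the convention (\ref{why}). Conditions (i) and (iii) of the corollary match verbatim with the hypotheses of Theorem \ref{Cx}: complete intersection dimension and vanishing of Tor carry the same meaning whether one speaks of modules or of the corresponding stalk complexes, and the vanishing $\Tor_{>0}^R(M,N)=0$ immediately implies the weaker $\Tor_{\gg 0}^R(M,N)=0$ required by the theorem.

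The only nontrivial translation is condition (ii): Theorem \ref{Cx} asks that the \emph{derived} tensor product $M \ltensor_R N$ satisfy $(S_{n+r})$, whereas the corollary hypothesizes only that the ordinary tensor product $M \otimes_R N$ does so. Here hypothesis (iii) is exactly what bridges the gap: $\Tor_{>0}^R(M,N) = 0$ forces $\Ho_i(M \ltensor_R N) = \Tor_i^R(M,N) = 0$ for all $i \neq 0$, so $M \ltensor_R N$ is quasi-isomorphic to $M \otimes_R N$ placed in degree zero. Since $\inf((M\otimes_R N)_p)=0$ whenever the localization is nonzero, Serre's condition (\ref{Sn}) on this complex at a prime $p\in\Supp(M\otimes_R N)$ reduces to the classical inequality $\depth_{R_p}((M\otimes_R N)_p)\geq \min\{n+r,\htt(p)\}$, which is exactly what hypothesis (ii) of the corollary provides.

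With all three hypotheses of Theorem \ref{Cx} verified, the theorem yields that $N$, viewed as a complex in degree zero, satisfies $(S_n)$; by the same reduction (with $\inf(N_p)=0$ on $\Supp(N)$), this is precisely the classical Serre's condition $(S_n)$ on the module $N$. There is no real obstacle in this argument; the genuine work of transferring between the module and complex formulations has already been absorbed into the statement and proof of Theorem \ref{Cx}, in accord with the remark in the introduction that a reader concerned only with modules may follow the arguments word for word upon replacing every $\inf$ and $\sup$ by zero.
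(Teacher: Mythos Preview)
Your proposal is correct and follows exactly the paper's own one-line argument: by (iii) one has $M\ltensor_R N \simeq M\otimes_R N$, so Theorem \ref{Cx} applies directly. You have simply unpacked the quasi-isomorphism and the module-versus-complex translation of $(S_n)$ in more detail than the paper does.
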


\begin{proof} By (iii),  $M\ltensor_{R}N \simeq
  M\otimes_{R}N$, so the conclusion follows from Theorem
  \ref{Cx}.
\end{proof}

Recall that, if $R$ is a complete intersection, then all finitely generated $R$-modules have finite complete intersection dimension; see \ref{qCI}. Therefore we have:

\begin{cor} \label{cor2} Let $R$ be a local complete intersection ring and let $M$ and $N$ be nonzero finitely generated $R$-modules. Set $r=\cx_{R}(M)$ and assume $n$ is a nonnegative integer. If $M\otimes_{R}N$ satisfies $(S_{n+r})$ and $\Tor^{R}_{>0}(M,N)=0$, then $N$ satisfies $(S_{n})$.
\end{cor}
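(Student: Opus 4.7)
The plan is to derive this corollary as an immediate consequence of Corollary \ref{cor1}, since the only difference between the two statements is that hypothesis (i) of Corollary \ref{cor1}, namely $\CI_{R}(M)<\infty$, is replaced here by the global assumption that $R$ itself is a complete intersection. So essentially all I need to do is verify hypothesis (i) of Corollary \ref{cor1}.

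For that verification I would appeal directly to \ref{qCI}, where it is recorded (citing \cite[1.3]{AGP}) that over a local complete intersection ring, every finitely generated module has finite complete intersection dimension. Thus $\CI_{R}(M)<\infty$ is automatic for our module $M$. The remaining two hypotheses of Corollary \ref{cor1}, namely that $M\otimes_{R}N$ satisfies $(S_{n+r})$ and that $\Tor^{R}_{>0}(M,N)=0$, are part of the hypotheses of the corollary we are trying to prove. So all three hypotheses of Corollary \ref{cor1} hold, and its conclusion that $N$ satisfies $(S_{n})$ is exactly what we want.

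There is no real obstacle: the corollary is essentially a specialization of Corollary \ref{cor1} to the complete intersection setting, packaged for convenience. If anything, the only thing worth remarking in the write-up is that the integer $r=\cx_{R}(M)$ is automatically finite in this setting; this follows from \ref{cx}, which records that finite complete intersection dimension forces finite complexity (with the sharper bound $\cx_{R}(M)\leq \edim(R)-\depth(R)$). Thus the statement is meaningful and the appeal to Corollary \ref{cor1} is legitimate.
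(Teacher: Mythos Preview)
Your proposal is correct and matches the paper's own argument: the paper states Corollary~\ref{cor2} immediately after recalling (via \ref{qCI}) that every finitely generated module over a complete intersection has finite complete intersection dimension, so the result follows directly from Corollary~\ref{cor1}. Your additional remark that $r=\cx_R(M)$ is finite is a reasonable observation but not needed for the deduction.
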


The complexity of a module over a complete intersection cannot exceed the codimension of the ring; see \ref{cx}. This yields:

\begin{cor} \label{corintro}
Let $R$ be a local complete intersection ring of codimension $c$ and let $M$ and $N$ be nonzero finitely generated $R$-modules. Assume that $M\otimes_{R}N$ satisfies $(S_{n+c})$ for some nonnegative integer $n$. If $\Tor^{R}_{>0}(M,N)=0$, then $M$ and $N$ satisfy $(S_{n})$.
\end{cor}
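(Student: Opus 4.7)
The plan is to deduce Corollary \ref{corintro} directly from Corollary \ref{cor2} by exploiting the symmetry of the hypotheses in $M$ and $N$, together with the fact that over a complete intersection of codimension $c$ every finitely generated module has complexity at most $c$.

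First I would observe that the hypotheses are symmetric: $M \otimes_R N \cong N \otimes_R M$ satisfies $(S_{n+c})$, and $\Tor^R_i(M,N) \cong \Tor^R_i(N,M)$ for all $i$, so the vanishing condition holds for the pair $(N,M)$ as well. Moreover, since $R$ is a local complete intersection, \ref{qCI} gives that both $\CI_R(M) < \infty$ and $\CI_R(N) < \infty$, so the finiteness hypothesis of Corollary \ref{cor2} is satisfied with either module playing the role of the first argument.

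Next I would use the complexity bound recorded in \ref{cx}: if $R$ is a complete intersection of codimension $c$, then for any finitely generated $R$-module $L$ with $\CI_R(L) < \infty$, we have $\cx_R(L) \le c$. Setting $r = \cx_R(M)$ and $s = \cx_R(N)$, we therefore have $r \le c$ and $s \le c$. Since $(S_{n+c})$ is a stronger condition than $(S_{n+r})$ and than $(S_{n+s})$, the hypothesis that $M \otimes_R N$ satisfies $(S_{n+c})$ implies in particular that $M \otimes_R N$ satisfies both $(S_{n+r})$ and $(S_{n+s})$.

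Finally I would invoke Corollary \ref{cor2} twice. Applying it to the pair $(M,N)$ with complexity $r = \cx_R(M)$ yields that $N$ satisfies $(S_n)$. Applying it to the pair $(N,M)$ with complexity $s = \cx_R(N)$ yields that $M$ satisfies $(S_n)$. This gives the desired conclusion. There is essentially no obstacle here beyond unwinding the definitions: the entire substance of the argument has already been carried out in the proof of Theorem \ref{Cx}, and Corollary \ref{corintro} is really just the packaging of Corollary \ref{cor2} under the uniform codimension bound on complexity together with the trivial symmetry between the two tensor factors.
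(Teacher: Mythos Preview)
Your proposal is correct and matches the paper's approach exactly: the paper states Corollary~\ref{corintro} as an immediate consequence of Corollary~\ref{cor2} together with the bound $\cx_R(-)\le c$ recorded in \ref{cx}, and your write-up simply spells out the symmetry argument needed to conclude for both $M$ and $N$.
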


The assertion that $M$ or $N$ is reflexive in the next result is implicitly contained in the proof of the Second Rigidity Theorem; see Theorem \ref{secondrigidity} and also Remark \ref{dis}.

\begin{cor} (Huneke--Wiegand \cite{HW1}) \label{cor3} 
Let $R$ be a local hypersurface ring and let $M$ and $N$ be nonzero finitely generated $R$-modules, either of which has constant rank. If $M\otimes_{R}N$ is reflexive, then (i) $M$ and $N$ are both torsion-free, and (ii) $M$ or $N$ is reflexive.
\end{cor}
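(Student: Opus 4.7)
The plan is to combine the Second Rigidity Theorem (\ref{secondrigidity}) with Corollary \ref{corintro} for part (i), and then reproduce the argument of Remark \ref{dis} for part (ii). Each of these ingredients is already available, so the task is essentially bookkeeping: match up the hypotheses, invoke the right statement, and deduce the conclusion.

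First I would handle the vanishing of Tor. Since $M\otimes_R N$ is reflexive, it is in particular torsion-free; combined with the constant rank hypothesis on $M$ or $N$, the Second Rigidity Theorem (\ref{secondrigidity}) gives $\Tor^R_{>0}(M,N)=0$. Next, for part (i), I would note that a hypersurface has codimension $c=1$ and that by (\ref{Snfact}) reflexivity is equivalent to $(S_2)$, so the hypothesis on $M\otimes_R N$ becomes ``$M\otimes_R N$ satisfies $(S_{n+c})$ with $n=1$.'' Applying Corollary \ref{corintro} with $n=1$ then yields that both $M$ and $N$ satisfy $(S_1)$, that is, both are torsion-free by (\ref{Snfact}).

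For part (ii), I would recycle the argument already laid out in Remark \ref{dis}. Without loss of generality, $M$ has constant rank, say $r$. Since $M\otimes_R N$ is torsion-free and nonzero while $N\neq 0$, the integer $r$ cannot be zero (otherwise $M$ would be torsion, which forces $M\otimes_R N$ to be torsion, a contradiction). Consequently, $M_p\neq 0$ for every minimal prime $p$ of $R$, so $\Supp(M)=\Spec(R)$. The vanishing of $\Tor^R_{>0}(M,N)$ combined with $\CI_R(M)<\infty$ (which holds since $R$ is a hypersurface, by \ref{qCI}) yields the depth formula (\ref{DF}). Localizing at any $p\in\Supp(N)$, every term is finite because $p\in\Supp(M)\cap\Supp(N)$, and the inequality $\depth(R_p)\geq \depth_{R_p}(M_p)$ together with the inequality from (\ref{ineqLars}) applied to $M\otimes_R N$ gives
\[
\depth_{R_p}(N_p)\;\geq\;\depth_{R_p}(N_p\otimes_{R_p} M_p)\;\geq\;\min\{2,\htt(p)\},
\]
so $N$ satisfies $(S_2)$ and hence is reflexive by (\ref{Snfact}).

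There is no real obstacle here: the two deep inputs (the Second Rigidity Theorem, and the depth-formula computation of Remark \ref{dis}) already exist in the paper, and Corollary \ref{corintro} is precisely the statement that upgrades torsion-freeness of the tensor product to torsion-freeness of both factors. The only small point requiring care is verifying that the constant rank of $M$ is positive, which is forced by the nonvanishing and torsion-freeness of $M\otimes_R N$.
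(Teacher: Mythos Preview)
Your proof is correct. Part (i) matches the paper's approach: both invoke the Second Rigidity Theorem to obtain $\Tor^R_{>0}(M,N)=0$ and then apply the main corollary with $n=1$ (you use Corollary~\ref{corintro} with $c=1$; the paper uses the equivalent Corollary~\ref{cor2} together with the bound $\cx\le 1$ over a hypersurface).

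For part (ii) the paper takes a different route. Rather than returning to the constant-rank computation of Remark~\ref{dis}, it invokes \cite[1.9]{HW2}: over a hypersurface, $\Tor^R_{>0}(M,N)=0$ forces one of $M$, $N$ to have finite projective dimension, hence complexity zero; then Corollary~\ref{cor2} with $n=2$ and $r=0$ immediately gives that the other module satisfies $(S_2)$. Your approach has the advantage of being self-contained from material already developed in the paper, avoiding the external citation. The paper's approach, on the other hand, applies its own machinery uniformly to both parts and makes clear that the constant-rank hypothesis is needed only to trigger the Second Rigidity Theorem, not for the subsequent depth analysis. (A minor point: your appeal to (\ref{ineqLars}) in the final display is unnecessary---once $p\in\Supp(M)\cap\Supp(N)=\Supp(M\otimes_R N)$, the $(S_2)$ hypothesis on $M\otimes_R N$ gives $\depth_{R_p}((M\otimes_R N)_p)\ge\min\{2,\htt(p)\}$ directly.)
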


\begin{proof} It follows from Theorem \ref{secondrigidity} that $\Tor^{R}_{>0}(M,N)=0$. This implies that either $M$ or $N$ has finite projective dimension \cite[1.9]{HW2}, that is, the complexity of $M$ or $N$ is zero. As the tensor product $M\otimes_{R}N$ satisfies $(S_{2})$, we conclude, setting $n=2$, that the claim of (ii) follows from Corollary \ref{cor2}. Furthermore, as the complexity of an $R$-module cannot exceed one, setting $n=1$ in Corollary \ref{cor2}, we obtain the required conclusion of (i).
\end{proof}

One can also obtain the conclusion of Corollary \ref{cor3}(i) from the proof of the Second Rigidity Theorem, see Theorem \ref{secondrigidity}: assume $R$, $M$ and $N$ are as in Corollary \ref{cor3}. Then $M\otimes_{R}N \cong (M/ \zdr R M)\otimes_{R}N$ and hence it follows from the Second Rigidity Theorem that $\Tor^{R}_{>0}(M/ \zdr R M,N)=0=\Tor^{R}_{>0}(M,N)$. Therefore, tensoring the short exact sequence $0 \to \zdr R M \to M \to M/ \zdr R M \to 0$ with $N$, we deduce that there is an injection $\zdr R M \otimes_{R}N \hookrightarrow M\otimes_{R}N$. This yields, as $N\neq 0$, that $\zdr R M=0$, that is, $M$ is torsion-free. Similarly one can see that $N$ is torsion-free, too.

We finish this section by pointing out, under the hypotheses of Corollary \ref{cor2}, the relationship between the depth and the complexity of $M$.

\begin{cor} \label{corfinal} Let $R$ be a local complete intersection ring of codimension $c$ and let $M$ and $N$ be nonzero finitely generated $R$-modules. Set $r=\cx_{R}(M)$ and assume $M\otimes_{R}N$ satisfies $(S_{c})$. Assume further that $\Tor^{R}_{>0}(M,N)=0$. 
\begin{enumerate}[(i)]
\item If $r<c$, then $N$ satisfies $(S_{c-r})$.
\item If $r=c$, then $M$ satisfies $(S_{c})$.
\end{enumerate}

\begin{proof} There is nothing to prove if $c=0$. Hence we may assume $c>0$. The claim of (i) immediately follows from Corollary \ref{cor2}. If $r=c$, then the vanishing of $\Tor^{R}_{\gg 0}(M,N)$ implies that $\pd(N)<\infty$ \cite[2.1]{Mi}. Therefore, since $M\otimes_{R}N$ is torsion-free, we have, as in  Remark $\ref{dis}$, that $\Supp(N)=\Spec(R)$ and hence $M$ satisfies $(S_{c})$.
\end{proof}

\end{cor}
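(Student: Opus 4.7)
The plan is to split the argument along the trichotomy $c=0$, $r<c$, $r=c$. The case $c=0$ is vacuous since $(S_{0})$ holds automatically, so I may assume $c\geq 1$. Part (i) will then reduce in one line to Corollary~\ref{cor2}, while part (ii) requires a genuine extra input.

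For (i), assuming $r<c$, I would set $n=c-r$, a positive integer with $n+r=c$. The hypothesis that $M\otimes_{R}N$ satisfies $(S_{c})$ is thus exactly the assumption $(S_{n+r})$ needed in Corollary~\ref{cor2}; the remaining hypotheses of that corollary, namely $\CI_{R}(M)<\infty$ and $\Tor^{R}_{>0}(M,N)=0$, hold because $R$ is a complete intersection and by assumption. The corollary then delivers $N\in(S_{c-r})$.

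For (ii), with $r=c$, the strategy is to place oneself in the situation of Remark~\ref{dis}, which requires both $\pd(N)<\infty$ and $\Supp(N)=\Spec(R)$. The first of these I would extract from a theorem of Miller \cite[2.1]{Mi}: over a complete intersection of codimension $c$, if $\cx_{R}(M)$ attains the maximum value $c$ and $\Tor^{R}_{\gg 0}(M,N)=0$, then $\pd(N)<\infty$. Constant rank of $N$ then follows, and the rank must be nonzero, for otherwise $N$ would be torsion, forcing $M\otimes_{R}N$ to be torsion and contradicting $(S_{c})\subseteq(S_{1})$; hence $\Supp(N)=\Spec(R)$, as in Remark~\ref{dis}. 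At this point $(M,N)$ satisfies the derived depth formula (\ref{DF}) by (iii) and the finiteness of $\CI_{R}(M)$. Localizing at any $p\in\Supp(M)=\Supp(M\otimes_{R}N)$ and combining with Auslander--Buchsbaum for $N_{p}$ gives
\[
\depth_{R_{p}}(M_{p})=\depth_{R_{p}}(M_{p}\otimes_{R_{p}}N_{p})+\pd_{R_{p}}(N_{p})\geq\min\{c,\htt(p)\},
\]
where the inequality uses that $M\otimes_{R}N$ satisfies $(S_{c})$. This is precisely $(S_{c})$ for $M$.

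The only substantive step is the appeal to Miller's theorem in part (ii); once $\pd(N)<\infty$ is in hand, the remainder is a direct transcription of the depth-formula manipulations of Remark~\ref{dis}. I do not foresee any obstacle beyond correctly invoking \cite[2.1]{Mi} at the right moment.
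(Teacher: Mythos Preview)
Your proposal is correct and follows essentially the same approach as the paper's proof: both handle $c=0$ trivially, derive (i) directly from Corollary~\ref{cor2} with $n=c-r$, and for (ii) invoke Miller's theorem \cite[2.1]{Mi} to obtain $\pd(N)<\infty$, then run the depth-formula argument of Remark~\ref{dis} with the roles of $M$ and $N$ interchanged. Your version of (ii) merely spells out explicitly (via Auslander--Buchsbaum) what the paper leaves as ``as in Remark~\ref{dis}''.
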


\section*{Acknowledgments}
We are greateful to Roger Wiegand for explaining to us the observation in \ref{dis}. We thank  Tokuji Araya, Lars Winther Christensen, Hailong Dao, Sean Sather-Wagstaff, Ryo Takahashi, Siamak Yassemi and Yuji Yoshino for discussions related to this work during its preparation. Our thanks are also due to the anonymous referee(s) whose suggestions have improved the presentation of the paper.

\def\cfudot#1{\ifmmode\setbox7\hbox{$\accent"5E#1$}\else
  \setbox7\hbox{\accent"5E#1}\penalty 10000\relax\fi\raise 1\ht7
  \hbox{\raise.1ex\hbox to 1\wd7{\hss.\hss}}\penalty 10000 \hskip-1\wd7\penalty
  10000\box7}

\end{document}